\font\smallit=cmti10
\renewcommand\section{\@startsection {section}{1}{\z@}
{-30pt \@plus -1ex \@minus -.2ex}
{2.3ex \@plus.2ex}
{\normalfont\normalsize\bfseries\boldmath}}
\renewcommand\subsection{\@startsection{subsection}{2}{\z@}
{-3.25ex\@plus -1ex \@minus -.2ex}
{1.5ex \@plus .2ex}
{\normalfont\normalsize\bfseries\boldmath}}
\renewcommand{\@seccntformat}[1]{\csname the#1\endcsname. }
\newtheorem{theorem}{Theorem}
\newtheorem{lemma}[theorem]{Lemma}
\newtheorem{proposition}[theorem]{Proposition}
\newtheorem*{theorem*}{Theorem}
\newcommand{\seqnum}[1]{\href{http://oeis.org/#1}{{#1}}}
\newcommand{\sumz}[1]{\sum_{#1=0}^\infty}
\begin{document}

\begin{center}
\uppercase{\bf On the Almkvist--Meurman theorem for Bernoulli polynomials}
\vskip 20pt
{\bf Ira M. Gessel%
  \footnote{Supported by a grant from the Simons Foundation (\#427060, Ira Gessel)}}\\
{\smallit Department of Mathematics, Brandeis University, Waltham, MA, USA}\\
{\tt gessel@brandeis.edu}\\ 
\end{center}
\vskip 10pt

\centerline{\bf Abstract}
\noindent
Almkvist and Meurman showed that if $h$ and $k$ are integers, then so is \[k^n\bigl(B_n(h/k) - B_n\bigr),\] where $B_n(u)$ is the Bernoulli polynomial. We give here a new and simpler proof of the Almkvist--Meurman theorem using generating functions. We describe some properties of these numbers and prove a common generalization of the Almkvist--Meurman theorem and a result of Gy on Bernoulli--Stirling numbers. We then give a simple generating function proof of an analogue of the Almkvist--Meurman theorem for Euler polynomials, due to Fox.
\pagestyle{myheadings}
\thispagestyle{empty}
\baselineskip=12.875pt
\vskip 30pt

\section{The Almkvist--Meurman Theorem}
\subsection*{Introduction}
Let $B_n(u)$ denote the $n$th Bernoulli polynomial, defined by  
\begin{equation}
\label{e-bp}
\sum_{n=0}^\infty B_n(u) \frac{x^n}{n!}=\frac{xe^{ux}}{e^x-1}.
\end{equation}
Then $B_n(0)$ is the $n$th Bernoulli number $B_n$, defined by 
\begin{equation*}
\sum_{n=0}^\infty B_n \frac{x^n}{n!}=\frac{x}{e^x-1}.
\end{equation*}
These generating functions may be viewed as formal power series or as convergent series for $|x| < 2\pi$.

For $k\ne0$, let
\begin{equation}
\label{e-Mdef}
M_n(h,k)=k^n\bigl(B_n(h/k) - B_n\bigr)=\sum_{i=0}^{n-1}\binom{n}{i}B_i h^i k^{n-i}.
\end{equation}
Almkvist and Meurman  \cite{am} showed in 1991 that if $h$ and $k$ are integers then $M_n(h,k)$ is an integer. Other proofs were given by Sury \cite{sury},
 Bartz and Rutkowski \cite{br}, and Clarke and Slavutskii \cite{cs}.
 
 We give here a  simple new  proof, using generating functions, of the Almkvist--Meurman theorem, and we discuss some of the properties of these integers. 
 We also prove a common generalization of the Almkvist--Meurman theorem and a theorem of Gy \cite{gy20} on ``Bernoulli-Stirling numbers," and we give a simple generating function proof of an analogue of the Almkvist--Meurman theorem  for Euler polynomials due to Fox \cite{fox}.
  
\subsection*{Vandiver's Theorem}
A closely related result, from which Almkvist and Meurman's result follows easily, was proved by Vandiver much earlier. (Previous authors on the Almkvist--Meurman theorem seem to  be unaware of Vandiver's work.) Vandiver stated his result and gave a brief indication of the proof  in 1937 \cite{vandiver1937} and gave a complete proof using a different approach in 1941 \cite[Theorem III]{vandiver1941}. Carlitz \cite[Theorem 2]{carlitz} gave another proof of Vandiver's theorem. We state Vandiver's theorem here and explain how the Almkvist--Meurman theorem follows from it.
\begin{theorem*}[Vandiver]
Let $h$ and $k$ be integers with $k\ne0$.
 If $n$ is even  and positive then
\begin{equation}
\label{e-vand}
k^n B_n(h/k) = G_n - \sum_p \frac{1}{p}
\end{equation}
where $G_n$ is an integer and the sum is over all primes $p$ such that $p-1\mid n$ but $p\nmid k$. If $n$ is odd then $k^n B_n(h/k)$ is an integer unless $n=1$ and $k$ is odd, and in this case $kB_1(h/k) =G_1+1/2$, where $G_1$ is an integer. 
\end{theorem*}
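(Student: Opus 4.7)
The plan is to reduce Vandiver's theorem to the classical von Staudt--Clausen theorem for Bernoulli numbers (which asserts $B_m + \sum_{(p-1)\mid m} 1/p \in \mathbb{Z}$ for even $m \geq 2$), via a roots-of-unity calculation prime-by-prime.

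I would start from the expansion
\[
k^n B_n(h/k) = \sum_{m=0}^{n} \binom{n}{m} B_m\, h^{n-m} k^m.
\]
Since $B_m = 0$ for odd $m \geq 3$, the only possibly non-integer contributions come from $m = 1$, which gives $-\tfrac{n}{2}h^{n-1}k$ (an integer when $n$ is even), and from even $m \geq 2$. Applying von Staudt--Clausen to each such $B_m$ gives, modulo $\mathbb{Z}$,
\[
k^n B_n(h/k) \equiv -\sum_{p} \frac{1}{p}\,U_p \pmod{\mathbb{Z}},
\qquad
U_p := \sum_{\substack{m\text{ even},\ 2\leq m\leq n\\(p-1)\mid m}} \binom{n}{m} h^{n-m} k^m,
\]
so everything hinges on evaluating $U_p$ modulo $p$ for each prime $p$.

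The key claim is $U_p \equiv \mathbf{1}[(p-1)\mid n]\cdot\mathbf{1}[p\nmid k] \pmod p$. If $p \mid k$ this is immediate since each term carries $k^m$ with $m \geq 2$. For odd $p$ with $p \nmid k$, I would use the indicator identity $\mathbf{1}[(p-1)\mid m] = \tfrac{1}{p-1}\sum_{\omega^{p-1}=1}\omega^m$ to recast the full sum over $0 \leq m \leq n$ as $\tfrac{1}{p-1}\sum_{\omega}(h+k\omega)^n$. Reducing mod $p$, the $(p-1)$th roots of unity become $\mathbb{F}_p^\times$ by Fermat, and after the bijection $b=ka$ of $\mathbb{F}_p^\times$ together with $\tfrac{1}{p-1}\equiv -1 \pmod p$, the sum equals $-\sum_{c\in\mathbb{F}_p,\,c\neq h} c^n \pmod p$. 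Combined with the classical identity $\sum_{c\in\mathbb{F}_p} c^n \equiv -\mathbf{1}[(p-1)\mid n] \pmod p$ (for $n \geq 1$), this gives $h^n + \mathbf{1}[(p-1)\mid n] \pmod p$; subtracting the $m=0$ term $h^n$ yields the claim. For $p=2$, a direct mod-$2$ computation using $h^i\equiv h$ and $k^i \equiv k \pmod 2$ for $i \geq 1$, together with $\sum_{m\text{ even},\ 0\leq m\leq n}\binom{n}{m}=2^{n-1}$, gives $U_2 \equiv k \pmod 2$, which recovers the claim.

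Putting this together, for $n$ even we obtain Vandiver's formula $k^n B_n(h/k) = G_n - \sum_{(p-1)\mid n,\,p\nmid k} 1/p$. For $n$ odd $\geq 3$, the odd-prime contributions vanish automatically (since $p-1$ is even and cannot divide an odd $n$), and the $p=2$ contribution together with the $-\tfrac{n}{2}h^{n-1}k$ from $B_1$ combines to an even integer via a short parity check, namely $nh^{n-1}k + U_2 \equiv 2hk \equiv 0 \pmod 2$, so $k^n B_n(h/k)$ is integral. The case $n=1$ is simply $kB_1(h/k) = h - k/2$. The main obstacle is the treatment of $p=2$ and the half-integer $B_1$ contribution: these do not fit cleanly into the roots-of-unity framework that governs the odd primes, and require a separate mod-$2$ argument, especially in the odd-$n$ case where the two fractional contributions must cancel.
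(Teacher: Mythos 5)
Your argument is correct, but it takes a genuinely different route from the paper's. The paper does not prove Vandiver's theorem directly: it first proves the Almkvist--Meurman theorem by a Hurwitz-series manipulation of the generating function $kx(e^{hx}-1)/(e^{kx}-1)$, and then observes that Almkvist--Meurman forces the fractional part of $k^nB_n(h/k)$ to equal that of $k^nB_n$, which the von Staudt--Clausen theorem (plus Fermat, to handle $k^n/p$ for $p-1\mid n$, $p\nmid k$) pins down. You instead prove Vandiver directly from von Staudt--Clausen by expanding $k^nB_n(h/k)=\sum_m\binom{n}{m}B_mh^{n-m}k^m$ and computing each $U_p$ modulo $p$ via the power-sum identity $\sum_{c\in\mathbb{F}_p}c^n\equiv-\mathbf{1}[(p-1)\mid n]$; this is essentially the classical route (close in spirit to Carlitz's proof cited in the paper) and is self-contained, whereas the paper's derivation buys Vandiver almost for free once Almkvist--Meurman is in hand but depends on that theorem. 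One small inaccuracy in your write-up: the ``key claim'' $U_2\equiv\mathbf{1}[2\nmid k]\pmod 2$ holds only for $n$ even; for $n$ odd one has $U_2\equiv hk\pmod 2$ (e.g.\ $n=3$, $h=0$, $k=1$ gives $U_2=0$, not $1$). Your final parity check $nh^{n-1}k+U_2\equiv hk+hk\equiv0\pmod2$ implicitly uses the correct congruence $U_2\equiv hk$, so the proof goes through, but the statement of the claim should be restricted to even $n$, with the odd-$n$ value of $U_2$ recorded separately as you in fact use it.
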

Since $B_n(0) = B_n$, the case $h=0, k=1$ of Vandiver's theorem is the well-known von Staudt--Clausen theorem \cite{carlitz}, which describes the fractional part of the Bernoulli numbers. 
Vandiver's theorem implies that, in all cases, the fractional part of $k^nB_n(h/k)$ is independent of $h$ and thus is equal to the fractional part of $k^nB_n(0) = k^n B_n$. Therefore $k^n (B_n(h/k) - B_n)$ is an integer. 
Conversely, Vandiver's theorem may be derived easily from the Almkvist--Meurman theorem together with the von Staudt--Clausen theorem. The Almkvist--Meurman theorem implies that the fractional part of $k^nB_n(h/k)$ is the same as the fractional part of $k^nB_n$, and the fractional part of $k^nB_n$ is easily determined by the von Staudt-Clausen theorem. 
Vandiver's theorem was rediscovered by Bartz and Rutkowski  \cite{br}, who derived the Almkvist--Meurman theorem from it.

\subsection*{A generating function proof of the Almkvist--Meurman theorem}
\label{s-AMproof}

A \emph{Hurwitz} series is a power series $\sum_{n=0}^\infty r_n x^n/n!$ for which each $r_n$ is an integer; i.e., it is the exponential generating function for a sequence of integers.  It is well known that Hurwitz series are closed under addition and multiplication, and that if $f(x)$ is a Hurwitz series with constant term 0 then $f(x)^k/k!$ is a Hurwitz series. In particular, the Stirling numbers of the second kind $S(n,k)$, defined by 
\begin{equation*}
\frac{(e^x-1)^k}{k!}=\sum_{n=k}^\infty S(n,k) \frac{x^n}{n!},
\end{equation*}
are integers.

Since all of the generating function we will be concerned with are exponential, by the ``coefficients" of a generating function we mean its coefficients as an exponential generating function. 

It follows from Equation \eqref{e-bp} and Equation \eqref{e-Mdef} that
\begin{equation}
\label{e-AMgf}
\sum_{n=0}^\infty M_n(h,k) \frac{x^n}{n!} 
 =\sum_{n=0}^\infty  k^n\bigl(B_n(h/k) - B_n\bigr) \frac{x^n}{n!}
 =  kx \frac{e^{hx}-1}{e^{kx}-1}.
 \end{equation}
We present here a new proof of the Almkvist--Meurman theorem using the generating function of Equation \eqref{e-AMgf} and basic facts about Hurwitz series. 
\begin{theorem}[Almkvist--Meurman] 
\label{t-am}
For all integers  $h$ and  $k$, with $k\ne0$,  $M_n(h,k)$ is an integer.
\end{theorem}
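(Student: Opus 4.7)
My plan is to prove directly that the generating function
\[F(x) := \frac{kx(e^{hx}-1)}{e^{kx}-1}\]
is a Hurwitz series. The starting observation is that $kx=\log e^{kx}=\log(1+v)$ where $v=e^{kx}-1$. Using the expansion $\log(1+v)/v=\sum_{n\ge 0}(-1)^n v^n/(n+1)$, I would rewrite
\[
\frac{kx}{e^{kx}-1}=\sum_{n\ge 0}(-1)^n\frac{(e^{kx}-1)^n}{n+1}\qquad\text{and hence}\qquad F(x)=\sum_{n\ge 0}(-1)^n\frac{(e^{hx}-1)(e^{kx}-1)^n}{n+1}.
\]

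Next I would argue that each summand is itself Hurwitz. For every integer $m$ the quotient $(e^{mx}-1)/(e^x-1)$ is a $\mathbb{Z}$-linear combination of exponentials $e^{jx}$ (equal to $\sum_{j=0}^{m-1}e^{jx}$ when $m\ge 0$ and to $-\sum_{j=m}^{-1}e^{jx}$ when $m<0$), so both $A(x):=(e^{hx}-1)/(e^x-1)$ and $B(x):=(e^{kx}-1)/(e^x-1)$ are Hurwitz. Factoring one copy of $e^x-1$ out of $(e^{hx}-1)$ and $n$ copies out of $(e^{kx}-1)^n$ gives
\[
\frac{(e^{hx}-1)(e^{kx}-1)^n}{n+1}=n!\cdot\frac{(e^x-1)^{n+1}}{(n+1)!}\cdot A(x)\,B(x)^n,
\]
and the factor $(e^x-1)^{n+1}/(n+1)!$ is Hurwitz by the Stirling-number identity recalled in the introduction. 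Since Hurwitz series are closed under multiplication and under scaling by the integer $n!$, each summand is Hurwitz.

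The last step is to observe that the $n$-th summand vanishes to order at least $n+1$ at $x=0$, so for each $m$ only finitely many summands contribute to the coefficient of $x^m/m!$ in $F(x)$; the infinite sum is therefore a well-defined Hurwitz series, whence $M_n(h,k)\in\mathbb{Z}$.

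The step I expect to require the most thought is spotting the rewriting $kx=\log\bigl(1+(e^{kx}-1)\bigr)$; once it is in hand, everything else is a direct application of closure properties of Hurwitz series and the $(e^x-1)^n/n!$ identity. Sign issues for $h$ and $k$ are harmless: the derivation above is symbolic in them, and the symmetry $M_n(-h,-k)=(-1)^n M_n(h,k)$ lets one reduce to the case $k>0$ if desired.
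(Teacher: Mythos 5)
Your proposal is correct and is essentially the paper's own proof: the key step in both is writing $kx=\log\bigl(1+(e^{kx}-1)\bigr)$, expanding the logarithm, and expressing each term as an integer multiple of a product of the Hurwitz series $(e^x-1)^{j}/j!$, $(e^{hx}-1)/(e^x-1)$, and $(e^{kx}-1)/(e^x-1)$. The only difference is cosmetic — the paper factors out $(e^{hx}-1)/(e^x-1)$ before expanding, while you carry it inside the sum.
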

\begin{proof} 
From Equation \eqref{e-AMgf} we have
\begin{equation*}
\sum_{n=0}^\infty M_n(h,k) \frac{x^n}{n!} 
 =  kx \frac{e^{hx}-1}{e^{kx}-1}
  =kx\frac{e^x-1}{e^{kx}-1}\cdot\frac{e^{hx}-1}{e^x-1}.
\end{equation*}
If $h\ge0$ then $(e^{hx}-1)/(e^x-1)$ is a Hurwitz series, since
\begin{equation*}
\frac{e^{hx}-1}{e^x-1}=1+e^x+\cdots+e^{(h-1)}.
\end{equation*}
If $h<0$ then 
\begin{equation*}
\frac{e^{hx}-1}{e^x-1}=-e^{hx}\frac{e^{-hx}-1}{e^x-1},
\end{equation*}
so $(e^{hx}-1)/(e^x-1)$ is also a Hurwitz series in this case.
Thus it is sufficient to show that $kx(e^x-1)/(e^{kx}-1)$ is a Hurwitz series. 

We have
\begin{align}
kx \frac{e^{x}-1}{e^{kx}-1} &= 
\frac{e^{x}-1}{e^{kx}-1}\log \bigl(1+ (e^{kx} -1)\bigr)\notag\\
&=\frac{e^{x}-1}{e^{kx}-1}
  \sum_{j=1}^\infty (-1)^{j-1}\frac{(e^{kx}-1)^j}{j}\notag\\
&=\sum_{j=1}^\infty (-1)^{j-1}\frac{(e^x-1)^j}{j} \left(\frac{e^{kx}-1}{e^x-1}\right)^{j-1}\notag\\
&=\sum_{j=1}^\infty (-1)^{j-1}(j-1)!\,\frac{(e^x-1)^j}{j!} \left(\frac{e^{kx}-1}{e^x-1}\right)^{j-1}.\label{e-a1}
\end{align}
Since $(e^x-1)^j/j!$ and $(e^{kx}-1)/(e^x-1)$ are Hurwitz series, so is the sum in Equation \eqref{e-a1}.
\end{proof}

\subsection*{The Almkvist--Meurman numbers}

In this section we discuss the Almkvist--Meurman numbers $M_n(h,k)$. 

\begin{proposition}
The numbers $M_n(h,k)$ have the following properties:
\begin{enumerate}
\item[\textup{(a)}]
For $h>0$ we have
\begin{equation*}
M_n(h,1) =n(0^{n-1}+1^{n-1}+\cdots+(h-1)^{n-1}).
\end{equation*}

\item[\textup{(b)}]
As a polynomial in $h$ and $k$, $M_n(h,k)$ is  homogeneous  of degree $n$.

\item[\textup{(c)}]
We have 
\begin{gather}
M_n(k-h,k) = (-1)^n M_n(h,k)\text{ for $n\ne 1$,}\label{e-Mc1}\\
M_n(h+k,k) = M_n(h,k) + nkh^{n-1}\text{ for all $n$.}\label{e-Mc2}
\end{gather}
\end{enumerate}
\end{proposition}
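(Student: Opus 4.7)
The plan is to derive all three parts from the generating function identity (\ref{e-AMgf}), namely $\sum_{n\ge0} M_n(h,k)\,x^n/n! = kx(e^{hx}-1)/(e^{kx}-1)$, together with the explicit sum formula in (\ref{e-Mdef}). Parts (a) and (b) should be essentially immediate, and part (c) should reduce to two small algebraic manipulations of this generating function.

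For part (a), I would specialize (\ref{e-AMgf}) to $k=1$ and expand the geometric sum: when $h>0$,
\begin{equation*}
\sum_{n=0}^\infty M_n(h,1)\frac{x^n}{n!} = x\,\frac{e^{hx}-1}{e^x-1} = x\sum_{j=0}^{h-1} e^{jx}.
\end{equation*}
Since the coefficient of $x^n/n!$ in $x\,e^{jx}$ is $n\,j^{n-1}$, summing over $j$ yields the claimed formula. Part (b) is immediate from (\ref{e-Mdef}): every term $\binom{n}{i}B_i h^i k^{n-i}$ has total degree $n$ in $h$ and $k$.

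For part (c), I would treat the two identities separately. For (\ref{e-Mc2}), the key algebraic observation is
\begin{equation*}
e^{(h+k)x}-1 = e^{hx}\bigl(e^{kx}-1\bigr) + \bigl(e^{hx}-1\bigr),
\end{equation*}
so that $kx(e^{(h+k)x}-1)/(e^{kx}-1) = kx\,e^{hx} + kx(e^{hx}-1)/(e^{kx}-1)$. Extracting the coefficient of $x^n/n!$ on both sides gives $M_n(h+k,k) = nkh^{n-1} + M_n(h,k)$. For (\ref{e-Mc1}), I would multiply numerator and denominator of $(e^{(k-h)x}-1)/(e^{kx}-1)$ by $e^{-kx}$ and rewrite the result as
\begin{equation*}
\frac{e^{(k-h)x}-1}{e^{kx}-1} = 1 - \frac{e^{-hx}-1}{e^{-kx}-1},
\end{equation*}
so that $kx(e^{(k-h)x}-1)/(e^{kx}-1) = kx - (-kx)(e^{-hx}-1)/(e^{-kx}-1)$. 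The latter term is the generating function (\ref{e-AMgf}) evaluated at $-x$, whose coefficients are $(-1)^n M_n(h,k)$. Comparing coefficients of $x^n/n!$ yields $M_n(k-h,k) = (-1)^n M_n(h,k)$ for all $n\ne 1$, with the stray $kx$ contributing the correction that prevents the identity from holding at $n=1$.

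The only mildly tricky point is bookkeeping the $n=1$ exception in (\ref{e-Mc1}): the extra $kx$ produced by the reflection identity is exactly what one needs to reconcile $M_1(k-h,k)=k-h$ with $-M_1(h,k)=-h$. Everything else reduces to routine coefficient extraction, so no serious obstacle is anticipated.
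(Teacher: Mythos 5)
Your parts (a) and (b) coincide with the paper's proof. For part (c), however, you take a genuinely different route: the paper deduces \eqref{e-Mc1} and \eqref{e-Mc2} from the classical Bernoulli polynomial identities $B_n(1-u)=(-1)^nB_n(u)$ and $B_n(u+1)=B_n(u)+nu^{n-1}$, with the $n\ne 1$ restriction in \eqref{e-Mc1} traced to the vanishing of $B_n$ for odd $n>1$, whereas you work directly with the generating function \eqref{e-AMgf}. Your decompositions are sound: $e^{(h+k)x}-1=e^{hx}(e^{kx}-1)+(e^{hx}-1)$ yields \eqref{e-Mc2}, and the reflection identity
\begin{equation*}
\frac{e^{(k-h)x}-1}{e^{kx}-1}=1-\frac{e^{-hx}-1}{e^{-kx}-1}
\end{equation*}
yields \eqref{e-Mc1}. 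What your argument buys is self-containedness---it needs neither the standard Bernoulli polynomial identities nor the fact that odd Bernoulli numbers vanish---and it makes the $n=1$ exception transparent as the coefficient of the stray $kx$; the paper's route is shorter but leans on quoted facts about $B_n(u)$. One small slip: multiplying the reflection identity by $kx$ gives
\begin{equation*}
kx\,\frac{e^{(k-h)x}-1}{e^{kx}-1}=kx+(-kx)\,\frac{e^{-hx}-1}{e^{-kx}-1},
\end{equation*}
that is, $kx$ \emph{plus} the series \eqref{e-AMgf} evaluated at $-x$; as written you have an extra minus sign in front of that term, which taken literally would flip the conclusion to $-(-1)^nM_n(h,k)$. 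Since your stated conclusion and your reconciliation at $n=1$ are consistent with the corrected sign, this is only a transcription error, not a gap.
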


\begin{proof}\leavevmode\vskip-\parskip

(a) By Equation \eqref{e-AMgf} we have $\sumz n M_n(h,1) x^n/n!=x(1+e^x+\cdots+e^{(h-1)x})$.

(b) This follows immediately from Equation \eqref{e-Mdef}.

(c)
By the identity $B_n(1-u) = (-1)^n B_n(u)$ for Bernoulli polynomials, we have
\begin{equation*}
k^n \left(B_n\Bigl(\frac{k-h}{k}\Bigr) -B_n\right)=(-1)^n k^n \left(B_n\Bigl(\frac{h}{k}\Bigr) -(-1)^nB_n\right).
\end{equation*}
Then Equation \eqref{e-Mc1} follows since $B_n=0$ if $n$ is odd and not equal to 1. Equation \eqref{e-Mc2} follows similarly from the identity $B_n(u+1) = B_n(u) +nu^{n-1}$.
\end{proof}

The first few values of $M_n(h,k)$ as  polynomials in $h$ and $k$ are
\begin{align*}
M_0(h,k) &=0,\\
M_1(h,k) &= h,\\
M_2(h,k)&=h^2-hk = -h(k-h),\\
M_3(h,k) &=h^3 -\tfrac32 h^2k+\tfrac12 hk^2 =\tfrac12h(k-h)(k-2h),\\
M_4(h,k) &= h^4-2h^3k+h^2k^2=h^2(k-h)^2,\\
M_5(h,k) &=h^5 -\tfrac52h^4k+\tfrac53h^3k^2-\tfrac16hk^4=-\tfrac16h(k-h)(k-2h)(k^2+3hk-3h^2),\\
M_6(h,k)&=h^6-3h^5k+\tfrac52h^4k^2-\tfrac12h^2k^4=\tfrac12 h^2(k-h)^2(k^2+2hk-2h^2).
\end{align*}

\begin{proposition} As a polynomial in $h$ and $k$, 
$M_n(h,k)$ has the following divisibility properties:
\begin{enumerate}
\item[\textup{(a)}] $M_n(h,k)$ is divisible by $h$ for all $n$.
\item[\textup{(b)}] $M_n(h,k) -h^n$ is divisible by $hk$ for $n\ge1$.
\item[\textup{(c)}] $M_n(h,k)$ is divisible by $k-h$ for $n>1$.
\item[\textup{(d)}] $M_n(h,k)$ is divisible by $k-2h$ for $n$ odd and greater than $1$.
\item[\textup{(e)}] $M_n(h,k)$ is divisible by $h^2(k-h)^2$ for $n$ even and greater than $2$.
\end{enumerate}
\end{proposition}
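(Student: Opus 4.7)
The plan is to derive everything from the explicit expansion
\[
M_n(h,k) = \sum_{j=1}^{n} \binom{n}{j} B_{n-j}\, h^j k^{n-j},
\]
obtained by substituting $B_n(u) = \sum_{j=0}^n \binom{n}{j} B_{n-j}\,u^j$ into the definition of $M_n(h,k)$, together with two standard facts about Bernoulli polynomials: $B_m = 0$ for odd $m > 1$, and $B_n(1/2) = (2^{1-n}-1)\,B_n$. The symmetry in Equation~\eqref{e-Mc1} will also be reused in part~(e).

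For (a), every term of the displayed sum carries a factor $h^j$ with $j \ge 1$, so $h \mid M_n(h,k)$. For (b), the only term contributing a pure power of $h$ is the $j = n$ term, equal to $h^n$; the remaining terms each have $j \ge 1$ and $n-j \ge 1$, hence $hk \mid M_n(h,k) - h^n$.

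For (c) and (d) I would use homogeneity to convert polynomial divisibility into vanishing of $B_n(u) - B_n$ at a single point. Concretely, since $M_n(h,k) = k^n\bigl(B_n(h/k) - B_n\bigr)$, a rational root $u = c$ of $B_n(u) - B_n$ produces, after clearing the denominator, a factor of $M_n(h,k)$ linear in $h$ and $k$. For (c), setting $u=1$ gives $B_n(1) - B_n = 0$ whenever $n \ne 1$ (using $B_n(1) = (-1)^n B_n$ and the vanishing of odd $B_n$ for $n > 1$), which yields $k-h \mid M_n(h,k)$. For (d), setting $u=1/2$ gives $B_n(1/2) - B_n = (2^{1-n}-2)B_n$, which vanishes for odd $n > 1$; this yields $k-2h \mid M_n(h,k)$ in that range.

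Part (e) is the one with slightly more structure, but should not be a serious obstacle. I would first show $h^2 \mid M_n(h,k)$ for even $n > 2$ by reading off the coefficient of $h^1$ in the displayed expansion, which equals $n B_{n-1} k^{n-1}$; since $n-1$ is odd and at least $3$, this coefficient vanishes. Then Equation~\eqref{e-Mc1} gives $M_n(k-h,k) = M_n(h,k)$ for even $n$, and substituting $h \mapsto k-h$ in the divisibility $h^2 \mid M_n(h,k)$ immediately yields $(k-h)^2 \mid M_n(h,k)$. Since $h$ and $k-h$ are coprime as polynomials, so are their squares, and therefore $h^2(k-h)^2 \mid M_n(h,k)$. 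The only point that requires care throughout is tracking the low-$n$ exceptions where the standard Bernoulli-polynomial identities degenerate.
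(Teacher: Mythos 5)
Your proof is correct and follows essentially the same route as the paper: parts (a)--(d) by reading off the expansion of $M_n(h,k)$ and homogenizing the rational roots $u=1$ and $u=1/2$ of $B_n(u)-B_n$, and part (e) by combining the vanishing of the linear coefficient $nB_{n-1}$ with the symmetry of Equation~\eqref{e-Mc1}, which is exactly the paper's argument that $u^2(1-u)^2$ divides $B_n(u)-B_n$ for even $n>2$.
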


\begin{proof}
The proofs of (a), (b), (c), and (d) are straightforward. For example, (a) is equivalent to $M_n(0,k)=0$, which is clear either from the generating function or from Equation \eqref{e-Mdef} and $B_n(0) = B_n$.

For (e), it is enough to show that for $n$ even and greater than 2, $B_n(u) -B_n$ is divisible by $u^2(1-u)^2$. Suppose that $n$ is even and greater than 2.  Since $B_n(u)= 
B_n +n B_{n-1}u+\cdots + u^n$ and $B_{n-1}=0$, it follows that $B_n(u) - B_n$ is divisible by $u^2$. Since $B_n(u) = B_n(1-u)$, $B_n(u) -B_n$ is also divisible by $(1-u)^2$.
\end{proof}

We note that by (b), as a polynomial in $h$ and $k$, $M_n(h,0) = h^n$ for $n>0$.
Empirically, it seems that for $n$ even, $(-1)^{n/2}M_n(a,a+b)$ is a polynomial in $a$ and~$b$ with positive coefficients, but the coefficients are not all integers.

Let $A_n(k)=M_n(1,k)$, so
\begin{equation}
\label{e-gf}
\sum_{n=0}^\infty A_n(k) \frac{x^n}{n!}=kx \frac{e^x -1}{e^{kx}-1}=\frac{kx}{1+e^x+\cdots + e^{(k-1)x}}.
\end{equation}
Some values of $A_n(k)$ are given in Table \ref{tab-Table1}.
\begin{table}[h!]  
\[  
\label{tab-Table1}  
\begin{array}{c|rrrrrrrrrr}  
k\backslash n&1&2&3&4&5&6&7&8&9&10\\
\hline
2&1&-1&0&1&0&-3&0&17&0&-155\\
3&1&-2&1&4&-5&-26&49&328&-809&-6710\\
4&1&-3&3&9&-25&-99&427&2193&-12465&-79515\\
5&1&-4&6&16&-74&-264&1946&9056&-88434&-512024\\
6&1&-5&10&25&-170&-575&6370&28225&-415826&-2294975\\
\end{array}
\]
\caption{Values of $A_n(k)$} 
\end{table}
The rows in this table are sequences \seqnum{A083007} through \seqnum{A083014} in the 
On-Line Encyclopedia of Integer Sequences \cite{oeis}. In particular, the numbers $A_n(2)$ are the well-known Genocchi numbers (\seqnum{A036968}; see also \seqnum{A001469} and \seqnum{A110501}) with exponential generating function $2x/(e^x+1)$. The numbers $(-1)^{n+1}A_{2n+1}(3)$ are Glaisher's G-numbers (\seqnum{A002111}).

It is apparent from the table that  $(-1)^{\lceil n/2\rceil}A_n(k)$ is positive  for $n>1$. We will prove this in Theorem \ref{t-pos} below. 
To do this we use properties of Bernoulli polynomials. These properties are known  (see N\"orlund \cite[pp.~22--23]{norlund}), but we include proofs for completeness.
\begin{lemma}
\label{l-1}
\mbox{}
\begin{enumerate}
\item[\textup{(a)}]For $m\ge1$, $(-1)^m B_{2m}(u)$ is increasing on $[0,1/2]$.
\item[\textup{(b)}] For $m\ge1$, $(-1)^m B_{2m-1}(u)$ is positive and $(-1)^{m+1} B_{2m+1}''(u)$ is negative
 on $(0,1/2)$. \end{enumerate}
\end{lemma}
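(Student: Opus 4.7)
My plan is to prove part (b) by induction on $m$ and deduce part (a) as an immediate corollary. First, (a) follows from the first clause of (b) by integration, since
\[
\bigl((-1)^m B_{2m}(u)\bigr)' = 2m(-1)^m B_{2m-1}(u),
\]
so positivity of $(-1)^m B_{2m-1}$ on $(0,1/2)$ yields monotonicity of $(-1)^m B_{2m}$ on $[0,1/2]$. Moreover, the two clauses of (b) are really the same statement in two guises: using $B_{2m+1}''(u) = (2m+1)(2m)B_{2m-1}(u)$, one has $(-1)^{m+1} B_{2m+1}''(u) = -(2m+1)(2m)\,(-1)^m B_{2m-1}(u)$, so ``$(-1)^{m+1}B_{2m+1}''$ is negative on $(0,1/2)$'' says exactly the same thing as ``$(-1)^m B_{2m-1}$ is positive on $(0,1/2)$.'' Thus the entire lemma reduces to proving the single statement $(-1)^m B_{2m-1}(u) > 0$ on $(0,1/2)$ for all $m \ge 1$.

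The base case $m = 1$ is immediate from $-B_1(u) = 1/2 - u$. For the inductive step, I set $f(u) = (-1)^{m+1} B_{2m+1}(u)$ and aim to show $f(u) > 0$ on $(0,1/2)$. The three ingredients are: (i) $f(0) = 0$, since $B_{2m+1} = 0$ for $m \ge 1$; (ii) $f(1/2) = 0$, by the reflection identity $B_n(1-u) = (-1)^n B_n(u)$; and (iii) $f''(u) = (2m+1)(2m)(-1)^{m+1} B_{2m-1}(u)$ is strictly negative on $(0,1/2)$, by the inductive hypothesis. A continuous function on $[0,1/2]$ with strictly negative second derivative on the open interval and vanishing at both endpoints is strictly positive on $(0,1/2)$ (by strict concavity and the standard chord inequality), so $f > 0$ on $(0,1/2)$. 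This closes the induction.

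There is no substantial obstacle: the main conceptual point is spotting that the two clauses of (b) collapse to a single statement via the second-derivative identity, after which the whole lemma reduces to an elementary concavity argument with zero boundary data. The only Bernoulli-polynomial facts needed are $B_n'(u) = n B_{n-1}(u)$, the reflection identity $B_n(1-u) = (-1)^n B_n(u)$, and the vanishing $B_n = 0$ for odd $n \ge 3$.
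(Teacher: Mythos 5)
Your proposal is correct and follows essentially the same route as the paper: induction on $m$ with the base case $-B_1(u)=1/2-u$, deducing (a) from positivity of $(-1)^mB_{2m-1}$ via $B_n'=nB_{n-1}$, and establishing the next odd case by the concavity argument with $p(0)=p(1/2)=0$ and $p''<0$. The only (harmless) difference is that you make explicit the observation that the two clauses of (b) are equivalent via $B_{2m+1}''=(2m+1)(2m)B_{2m-1}$, which the paper uses implicitly.
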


\begin{proof} From the definition of the Bernoulli polynomials in Equation \eqref{e-bp} it follows that 
\begin{equation}
\label{e-diff}
B_n'(u) = nB_{n-1}(u). 
\end{equation}
We will also  use the well-known facts that $B_n=0$ for $n$ odd and greater than 1 and $B_n(1/2)=0$ for all odd $n$.

We proceed by induction on $m$. We have $-B_1(u) = 1/2 -u$, so $-B_1(u)$ is positive on $(0,1/2)$. Now suppose that $m\ge1$ and that $(-1)^{m}B_{2m-1}(u)$ is positive on $(0,1/2)$. It follows from 
Equation \eqref{e-diff} that $(-1)^{m}B_{2m}(u)$ is increasing on $[0,1/2]$. 

Setting $p(u) = (-1)^{m+1}B_{2m+1}(u)$, we have $p(0) = p(1/2) = 0$ and \[p''(u) = (-1)^{m+1}(2m+1)(2m)B_{2m-1}(u),\] which is negative on $(0,1/2)$. So $p(u)$ is strictly concave on $[0,1/2]$, and therefore takes on its minimum values only at the endpoints of this interval. Thus $p(u)$ is positive on $(0,1/2)$.
\end{proof}

\begin{lemma}
\label{l-2}
Let $\tilde{B}_n(u) = B_n(u) - B_n$. 
\begin{enumerate}
\item[\textup{(a)}]
If $n$ is odd and greater than 1 then $(-1)^{\lceil n/2\rceil}\tilde{B}_n(u)$ is positive for $0<u<1/2$. 
\item[\textup{(b)}] If $n$ is even then  $(-1)^{\lceil n/2\rceil}\tilde{B}_n(u)$ is positive for $0<u<1$.
\item[\textup{(c)}] If $n$ is divisible by $4$ then $\tilde{B}_n(u)$ is nonnegative for all $u$.
\end{enumerate}
\end{lemma}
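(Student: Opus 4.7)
The plan is to derive each part of Lemma \ref{l-2} from Lemma \ref{l-1} together with the standard identities for Bernoulli polynomials already invoked in the excerpt, namely $B_n(1-u)=(-1)^nB_n(u)$ and $B_n(u+1)=B_n(u)+nu^{n-1}$, as well as the vanishing $B_n=0$ for odd $n>1$.

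For part (a), write $n=2m+1$ with $m\ge 1$, so $\lceil n/2\rceil = m+1$. Since $B_{2m+1}=0$, we have $\tilde{B}_n(u)=B_{2m+1}(u)$, and the claim $(-1)^{m+1}B_{2m+1}(u)>0$ on $(0,1/2)$ is precisely the second part of Lemma \ref{l-1}(b). For part (b), write $n=2m$, so $\lceil n/2\rceil = m$. By Lemma \ref{l-1}(a), $(-1)^m B_{2m}(u)$ is increasing on $[0,1/2]$, so $(-1)^m\bigl(B_{2m}(u)-B_{2m}\bigr)>0$ for $0<u\le 1/2$, which gives the claim on that half-interval. The symmetry $B_{2m}(1-u)=B_{2m}(u)$ shows that $\tilde{B}_{2m}$ is symmetric about $u=1/2$, extending positivity to the whole interval $(0,1)$.

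For part (c), write $n=4j$, so that $(-1)^{\lceil n/2\rceil}=1$; part (b) (together with $\tilde{B}_n(0)=\tilde{B}_n(1)=0$) then gives $\tilde{B}_n(u)\ge 0$ on $[0,1]$. To extend this to all $u$, apply the recurrence $\tilde{B}_n(u+1)=\tilde{B}_n(u)+nu^{n-1}$. Since $n-1$ is odd and $n>0$, the term $nu^{n-1}$ has the same sign as $u$, so starting from $[0,1]$ and repeatedly shifting by $+1$ only adds nonnegative quantities (for $u\ge 0$), while shifting by $-1$ subtracts the nonpositive quantity $nu^{n-1}$ (for $u\le 0$) and thus also preserves nonnegativity. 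An easy induction on the integer part of $u$ gives $\tilde{B}_n(u)\ge 0$ for all real $u$.

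The only non-mechanical step is part (c): one has to notice that the quasi-periodicity $B_n(u+1)-B_n(u)=nu^{n-1}$ combined with $n\equiv 0\pmod 4$ (so $n-1$ is odd) produces an increment that is sign-compatible with what one needs in both directions from $[0,1]$. Parts (a) and (b) are essentially translations of Lemma \ref{l-1}.
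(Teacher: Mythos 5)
Your proof is correct and takes essentially the same route as the paper: parts (a) and (b) are read off from Lemma \ref{l-1} using $B_n=0$ for odd $n>1$ and the symmetry $B_n(1-u)=(-1)^nB_n(u)$, and part (c) is propagated outward from $[0,1]$ via $B_n(u+1)=B_n(u)+nu^{n-1}$. The paper compresses (c) into a single sentence; your two-directional induction on the integer part of $u$, using that $n-1$ is odd so the increment $nu^{n-1}$ has the sign of $u$, is exactly the argument that sentence is summarizing.
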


\begin{proof} Part (a) follows immediately from (b) of Lemma \ref{l-1}, since for $n$ odd and greater than~1, $\tilde{B}_n(u) = B(u)$.  Part (b) follows from (a) of Lemma \ref{l-1} together with the facts that $\tilde{B}_n(0) = 0$ and $B_n(1-u) = (-1)^n B_n(u)$. Part (c) follows from part (b) and the formula $B_n(u+1) = B_n(u) +n u^{n-1}$.
\end{proof}

Lemma \ref{l-2} implies the following positivity results for $M_n(h,k)$.

\begin{theorem}
\label{t-pos}
Let $k$ be a positive integer.  
\begin{enumerate}
\item[\textup{(a)}]
If $n$ is odd and greater than $1$, and $0<h<k/2$, then $(-1)^{\lceil n/2\rceil}M_n(h,k)$ is positive.
\item[\textup{(b)}] If $n$ is even and $0<h<k$ then  $(-1)^{\lceil n/2\rceil}M_n(h,k)$ is positive.
\item[\textup{(c)}] If $n$ is divisible by $4$ then $M_n(h,k)$ is nonnegative for all $h$.
\end{enumerate}
\end{theorem}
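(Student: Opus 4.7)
The plan is to deduce Theorem \ref{t-pos} directly from Lemma \ref{l-2} via the identity
\[
M_n(h,k) = k^n\bigl(B_n(h/k)-B_n\bigr) = k^n\tilde{B}_n(h/k),
\]
which is just a restatement of Equation \eqref{e-Mdef} in terms of the $\tilde{B}_n$ notation of Lemma \ref{l-2}. Since $k$ is a positive integer, $k^n$ is positive, and hence the sign of $M_n(h,k)$ agrees with the sign of $\tilde{B}_n(h/k)$. So everything reduces to checking that $h/k$ lies in the interval where the relevant part of Lemma \ref{l-2} applies.

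For (a), the hypothesis $0<h<k/2$ translates into $0<h/k<1/2$, which is exactly the range in Lemma \ref{l-2}(a); that lemma gives $(-1)^{\lceil n/2\rceil}\tilde{B}_n(h/k)>0$, so $(-1)^{\lceil n/2\rceil}M_n(h,k)>0$. For (b), the hypothesis $0<h<k$ gives $0<h/k<1$, which is the range in Lemma \ref{l-2}(b), yielding the same conclusion with the appropriate sign. For (c), when $4\mid n$, Lemma \ref{l-2}(c) asserts $\tilde{B}_n(u)\ge0$ for all real $u$, hence $M_n(h,k)=k^n\tilde{B}_n(h/k)\ge0$ without any restriction on $h$.

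There is essentially no obstacle here beyond bookkeeping: the real content is contained in Lemma \ref{l-2}, and the theorem is a translation of that lemma into the language of the integer-valued quantities $M_n(h,k)$ via the scaling factor $k^n>0$. The only thing to be slightly careful about is that in (c) the statement allows arbitrary integer $h$ (including $h\le 0$ or $h\ge k$), and this is precisely why part (c) of Lemma \ref{l-2} is stated for all real $u$ rather than just $0<u<1$.
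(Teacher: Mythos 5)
Your proof is correct and is exactly the argument the paper intends: Theorem \ref{t-pos} is stated as an immediate consequence of Lemma \ref{l-2} via $M_n(h,k)=k^n\tilde{B}_n(h/k)$ with $k^n>0$, which is precisely your reduction. Nothing further is needed.
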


\section{A generalization of Gy's theorem}
Gy \cite[Theorems 6.1 and 6.2]{gy20} has studied the numbers with exponential generating function 
\begin{equation*}
\frac{(e^x -1)^k}{k!} \frac{kx}{e^{kx}-1}
\end{equation*}
and has shown that they are integers.  Comparison with the Almkvist--Meurman theorem suggests that we look at the more general exponential generating function
\begin{equation}
\label{e-gy1}
\frac{(e^{hx} -1)^j}{j!} \frac{kx}{e^{kx}-1},
\end{equation} 
which reduces to Gy's generating function for $h=1, j=k$ and to the Almkvist--Meurman generating function for $j=1$.
The coefficient of $x^n/n!$ in \eqref{e-gy1} is 
\begin{equation}
\label{e-gyc}
\sum_{i=0}^{n-j} \binom{n}{i}h^{n-i} S(n-i,j) k^{i}B_{i}.
\end{equation}
It is not true that these numbers are always integers. However, in Theorem \ref{t-GAM} below we give a sufficient condition  for \eqref{e-gy1} to be a Hurwitz series, and in Theorem \ref{t-nec} we show that this condition is necessary. In particular, Theorem \ref{t-GAM} implies that \eqref{e-gy1} is a Hurwitz series  for $j=1$ and for $j=k$, so it is a common generalization of Gy's theorem and the Almkvist--Meurman theorem.  However, our proof of Theorem \ref{t-GAM} is more complicated than the proof of the Almkvist--Meurman theorem given in Section \ref{s-AMproof}, and requires the von Staudt--Clausen theorem.

\begin{theorem}
\label{t-GAM}
Let $j$ be a positive integer, and let $h$ and $k$ integers. Suppose that every prime divisor of $j$ also divides $h$ or $k$. Then 
\begin{equation}
\label{e-gyx}
\frac{(e^{hx} -1)^j}{j!} \frac{kx}{e^{kx}-1}
\end{equation} 
is a Hurwitz series.
\end{theorem}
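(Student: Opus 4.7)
The plan is to combine the Almkvist--Meurman theorem with the von Staudt--Clausen theorem to isolate the $p$-adic obstructions in $\frac{kx}{e^{kx}-1}$, and then use the hypothesis on $j$ to make the denominator $j!$ harmless one prime at a time. By von Staudt--Clausen, for each $n \ge 1$ with $B_n \ne 0$ we have $B_n + \sum_{p-1 \mid n} 1/p \in \mathbb Z$; since $k^n/p$ is already an integer whenever $p \mid k$ and $n \ge 1$, we may absorb those contributions into a Hurwitz series $\tilde H$ and write
\[
\frac{kx}{e^{kx}-1} \;=\; \tilde H(x) \;-\; \sum_{p \nmid k} \frac{E_p(kx) - 1}{p},
\]
where $E_p(y) - 1 := \sum_{n \ge 1,\; p-1 \mid n,\; B_n \ne 0} y^n/n!$ is the Hurwitz series isolating the $p$-adic fractional parts.

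Multiplying this identity by $e^{hx}-1$ and invoking the Almkvist--Meurman theorem (the case $j=1$), both the left side and $(e^{hx}-1)\tilde H$ are Hurwitz, so $\sum_{p \nmid k}(e^{hx}-1)(E_p(kx)-1)/p$ has integer Hurwitz coefficients. The elementary observation that $\sum_p a_p/p \in \mathbb Z$ with $a_p \in \mathbb Z$ forces $p \mid a_p$ for every prime $p$ (apply the $q$-adic valuation to isolate the $q$-term) then gives the key lemma: for every prime $p \nmid k$, all Hurwitz coefficients of $(e^{hx}-1)(E_p(kx)-1)$ are divisible by $p$. Multiplying the decomposition by $\frac{(e^{hx}-1)^j}{j!}$ reduces the theorem to showing that for each prime $p \nmid k$, the Hurwitz coefficients of $\frac{(e^{hx}-1)^j(E_p(kx)-1)}{j!}$ are divisible by $p$; since only finitely many primes contribute to any given coefficient of the resulting sum, this will conclude the argument.

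I would then split into two cases. If $p \mid h$, the Hurwitz expansion $(e^{hx}-1)^j/j! = \sum_n S(n,j) h^n x^n/n!$ has every coefficient divisible by $h$ (hence by $p$) for $n \ge 1$, and the product with the Hurwitz series $E_p(kx)-1$ inherits this. Otherwise $p \nmid h$; combined with $p \nmid k$ and the hypothesis that every prime factor of $j$ divides $h$ or $k$, this forces $p \nmid j$. Writing $(e^{hx}-1)(E_p(kx)-1) = pG$ with $G$ Hurwitz (by the key lemma), the target becomes
\[
\frac{(e^{hx}-1)^j(E_p(kx)-1)}{j!} \;=\; \frac{p}{j} \cdot \frac{(e^{hx}-1)^{j-1}G}{(j-1)!};
\]
the rightmost factor is a product of two Hurwitz series, so its Hurwitz coefficients $H_N$ are integers, and the target's Hurwitz coefficient is $c_N = pH_N/j$, which is itself an integer because the target is also a product of Hurwitz series. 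Since $\gcd(p,j)=1$, this forces $p \mid c_N$. The main obstacle is the key lemma, whose content is essentially AM rephrased through the von Staudt--Clausen decomposition; the hypothesis on $j$ enters precisely to guarantee $\gcd(p,j) = 1$ whenever $p \nmid hk$, ensuring that dividing by $j$ does not erase the single factor of $p$ supplied by the lemma.
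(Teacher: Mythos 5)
Your proof is correct. It shares the paper's overall skeleton---localize at one prime $p\nmid k$ at a time via the von Staudt--Clausen theorem, split on whether $p\mid h$, and use the hypothesis on $j$ only to guarantee $p\nmid j$ in the remaining case---but the two key mechanisms are genuinely different. The paper works with ``$p$-Hurwitz'' series throughout and obtains the crucial $p$-integrality of $B(kx)-B(hx)$, where $B(x)=x/(e^x-1)$, directly from Fermat's little theorem: if $p-1\mid n$ and $p\nmid hk$, then $p\mid k^n-h^n$, which cancels the single factor of $p$ in the denominator of $B_n$. You instead extract the fractional parts of $B(kx)$ explicitly as $-\sum_{p\nmid k}\bigl(E_p(kx)-1\bigr)/p$ (in your notation) and bootstrap the needed divisibility---that $p$ divides every coefficient of $(e^{hx}-1)\bigl(E_p(kx)-1\bigr)$---from Theorem \ref{t-am} itself, via the observation that $\sum_p a_p/p\in\mathbb{Z}$ with $a_p\in\mathbb{Z}$ forces $p\mid a_p$. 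This is an appealing economy, since the generalization then literally reuses the special case $j=1$, at the cost of a slightly fussier formal manipulation: one must note that the interchange of the sum over $n$ with the sum over primes is legitimate because only the finitely many primes with $p-1\mid n$ contribute to a given coefficient. In the final case the paper also has a slicker move: rather than your integrality-plus-$\gcd(p,j)=1$ argument, it cancels $e^{hx}-1$ against $B(hx)$ to write $\frac{(e^{hx}-1)^j}{j!}B(hx)=\frac{hx}{j}\cdot\frac{(e^{hx}-1)^{j-1}}{(j-1)!}$, which is visibly $p$-integral when $p\nmid j$. Both roads are sound; the paper's is somewhat shorter and more self-contained, while yours makes the logical dependence on the Almkvist--Meurman theorem explicit.
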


\begin{proof}
Our proof is based on Gy's proof for the case $h=1,j=k$. 

Let $p$ be a prime. A rational number is called \emph{$p$-integral} if its denominator is not divisible by $p$. We call an exponential generating function a \emph{$p$-Hurwitz series} if its coefficients are $p$-integral. Every Hurwitz series is $p$-Hurwitz, and $p$-Hurwitz series are closed under multiplication. It is clear that a power series is a Hurwitz series if and only if it is a $p$-Hurwitz series for every prime $p$.

Let $B(x)=x/(e^x-1)$ be the Bernoulli number generating function. 
We will use two consequences of the von Staudt--Clausen theorem: (i) the denominator of $B_n$ is square-free, and (ii) for any prime $p$, $B_n$ is $p$-integral unless $p-1\mid n$. The first consequence implies that for any integer $k$, $pB(kx)$ is $p$-Hurwitz and that if $p$ divides $k$ then $B(kx)$ is $p$-Hurwitz. 
The second, together with Fermat's theorem, implies that if neither $h$ nor $k$ is divisible by $p$ then  $(k^n-h^n)B_n$ is $p$-integral for all $n$, and thus $B(kx)-B(hx)$ is $p$-Hurwitz. 

To prove the theorem it is sufficient to show that for every prime $p$,  \eqref{e-gyx} is $p$-Hurwitz. We consider three cases (with some overlap): (a) $p\mid h$, (b) $p\mid k$, and  (c) $p\nmid k$ and $p\nmid h$.

(a) Suppose that $p\mid h$. Then since $(e^x-1)^j/j!$ is a Hurwitz series with no constant term, every coefficient of $(e^{hx}-1)^j/j!$ is divisible by $p$.
Thus we may write \eqref{e-gyx} as 
\begin{equation*}
\frac{(e^{hx}-1)^j}{j!\, p}\cdot pB(kx)
\end{equation*}
and both factors are $p$-Hurwitz.

(b) Suppose that $p\mid k$.
Then $B(kx)$ is $p$-Hurwitz, so  \eqref{e-gyx} is $p$-Hurwitz.

(c) Suppose that $p\nmid h$ and $p\nmid k$. Then $p\nmid j$. 
We write \eqref{e-gyx} as
\begin{equation}
\label{e-gy2}
\frac{(e^{hx} -1)^j}{j!} \bigl(B(kx) - B(hx)\bigr) +\frac{(e^{hx} -1)^j}{j!}B(hx).
\end{equation} 
Since $B(kx) - B(hx)$ is $p$-Hurwitz,  so is the first term in \eqref{e-gy2}. The second term in \eqref{e-gy2} may be written
\begin{equation}
\label{e-gy3}
\frac{hx}{j} \frac{(e^{hx}-1)^{j-1}}{(j-1)!}.
\end{equation}
Since $p\nmid j$, \eqref{e-gy3} is also $p$-Hurwitz.
\end{proof}

The condition in Theorem \ref{t-GAM} that every prime divisor of $j$ also divides $h$ or $k$ is necessary, as shown by the following result. 

\begin{theorem}
\label{t-nec}
Let $j$ be a positive integer, and let $h$ and $k$ be integers. Let $p$ be a prime divisor of $j$ that divides neither $h$ nor $k$. Then the coefficient of $x^{j+p-1}/(j+p-1)!$ in \eqref{e-gyx} is not $p$-integral.
\end{theorem}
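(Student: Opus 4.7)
My plan is to start from the explicit expression for the coefficient already given in \eqref{e-gyc} and analyze $p$-adic valuations term by term. Taking $n = j+p-1$ in \eqref{e-gyc}, the coefficient of $x^{j+p-1}/(j+p-1)!$ in \eqref{e-gyx} is
\[
C \;=\; \sum_{i=0}^{p-1} \binom{j+p-1}{i}\, h^{j+p-1-i}\, S(j+p-1-i,\,j)\, k^{i}\, B_{i},
\]
and the goal is to show that the $p$-adic valuation $v_p(C)$ equals $-1$.

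The key observation is that, by the von Staudt--Clausen theorem, among the Bernoulli numbers $B_0, B_1, \ldots, B_{p-1}$ the only one that fails to be $p$-integral is $B_{p-1}$, and there $v_p(B_{p-1}) = -1$. So a single term of the sum is dangerous, the one at $i = p-1$:
\[
T_0 \;=\; \binom{j+p-1}{p-1}\, h^{j}\, k^{p-1}\, B_{p-1},
\]
using $S(j,j) = 1$. I plan to show $v_p(T_0) = -1$ and that every other term is $p$-integral, which forces $v_p(C) = -1$.

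For $T_0$ itself: the hypothesis $p\nmid h$ and $p\nmid k$ gives $v_p(h^{j} k^{p-1}) = 0$, while $v_p(B_{p-1}) = -1$ by von Staudt--Clausen. The binomial coefficient is handled by Kummer's theorem: since $p \mid j$, the base-$p$ expansion of $j$ ends in the digit $0$, so adding $j$ and $p-1$ in base $p$ produces no carries, hence $v_p\bigl(\binom{j+p-1}{p-1}\bigr) = 0$. Thus $v_p(T_0) = -1$. For the remaining terms ($i \in \{0, 1, \ldots, p-2\}$), $B_i$ is $p$-integral (either $i = 0$ with $B_0 = 1$, or $p-1 \nmid i$), while $\binom{j+p-1}{i}$, $h^{j+p-1-i}$, $S(j+p-1-i, j)$ and $k^i$ are integers, so each such term contributes nonnegatively to $v_p$. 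Since the unique term with $v_p = -1$ cannot be cancelled, $v_p(C) = -1$ and $C$ is not $p$-integral. The main obstacle is the Kummer step on the binomial coefficient, which is precisely where the hypothesis $p \mid j$ enters (otherwise the binomial could pick up a factor of $p$ and absorb the pole of $B_{p-1}$); the rest is routine bookkeeping on $p$-integrality.
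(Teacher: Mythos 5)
Your proof is correct and follows essentially the same route as the paper: isolate the $i=p-1$ term via von Staudt--Clausen, note $S(j,j)=1$, and show the binomial coefficient $\binom{j+p-1}{p-1}$ is prime to $p$ using the hypothesis $p\mid j$. The only (cosmetic) difference is that you invoke Kummer's carry criterion where the paper uses Lucas's congruence; both yield the same conclusion.
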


\begin{proof}
From \eqref{e-gyc}, the coefficient of $x^{j+p-1}/(j+p-1)!$ is
\begin{equation}
\label{e-conv}
\sum_{i=0}^{p-1} \binom{j+p-1}{i}h^{j+p-1-i} S(j+p-1-i,j) k^{i}B_{i}.
\end{equation}
By the von Staudt--Clausen theorem, $B_i$ is $p$-integral for $i<p-1$, but not for $i=p-1$. 
Thus the terms of \eqref{e-conv} with $i<p-1$ are all $p$-integral, so it is sufficient to show that the $i=p-1$ term is not $p$-integral. Since $S(j,j) =1$, the $i=p-1$ term  is 
$\binom{j+p-1}{j}h^j k^{p-1}B_{p-1}$. Neither $h$ nor $k$ is divisible by $p$, so it is enough to show that 
$\binom{j+p-1}{j}$ is not divisible by $p$. To do this we apply Lucas's congruence for binomial coefficients: since $p$ divides $j$ we may set
$j=pr$, and then $\binom{j+p-1}{j} = \binom{pr + p-1}{pr}\equiv \binom{r}{r}\binom{p-1}{0}\equiv 1 \pmod p$.
\end{proof}

\section{Fox's theorem on Euler polynomials} 
There is an analogue of the Almkvist--Meurman theorem for Euler polynomials due to Fox~\cite{fox}.
The Euler polynomials $E_n(u)$ are defined by 
\begin{equation}
\label{e-Eul}
\sum_{n=0}^\infty E_n(u) \frac{x^n}{n!}=\frac{2e^{ux}}{e^x+1}.
\end{equation}
Fox showed that if $r$ and $s$ are integers then 
$s^n\bigl(E_n(r/s) - (-1)^{rs}E_n(0)\bigr)$ is an integer.
Another proof of Fox's theorem was given by Sury \cite{sury2}. We give here a simple  proof of a slight strengthening of Fox's theorem using generating functions.
\begin{theorem} Let $r$ and $s$ be integers, with $s\ne 0$. 
If $s$ is even then $s^nE_{n}(r/s)$ is an integer, and if $s$ is odd then $\tfrac{1}{2} s^n\bigl(E_n(r/s) - (-1)^r E_n(0)\bigr)$ is an integer. 
\end{theorem}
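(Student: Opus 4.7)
The plan is to establish each case by showing that the appropriate generating function is a Hurwitz series, in the spirit of the proof of Theorem~\ref{t-am}. Without loss of generality we may take $s > 0$, as replacing $(r,s)$ by $(-r,-s)$ only changes $s^n E_n(r/s)$ by a factor of $(-1)^n$.

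For $s$ even, write $s = 2m$. Multiplying the numerator and denominator of $2e^{rx}/(e^{sx}+1)$ by $e^{-mx}$ gives
\[\sum_{n=0}^\infty s^n E_n(r/s)\,\frac{x^n}{n!} = \frac{2e^{rx}}{e^{sx}+1} = e^{(r-m)x}\operatorname{sech}(mx).\]
Both factors on the right are Hurwitz: $e^{(r-m)x}$ has integer EGF coefficients $(r-m)^n$, and $\operatorname{sech}(mx)$ is Hurwitz because its EGF coefficients are $m^n$ times the (integer) secant numbers $1,0,-1,0,5,0,-61,\ldots$. Their product is Hurwitz, giving $s^n E_n(r/s)\in\mathbb{Z}$.

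For $s$ odd, I would instead show that
\[\frac{e^{rx}-(-1)^r}{e^{sx}+1} = \sum_{n=0}^\infty \tfrac{1}{2}s^n\bigl(E_n(r/s)-(-1)^r E_n(0)\bigr)\frac{x^n}{n!}\]
is a Hurwitz series. The key observation is that for $s$ odd, $y+1$ divides $y^s+1$, so one may factor $e^{sx}+1 = (e^x+1)P(e^x)$ with $P(y) = y^{s-1}-y^{s-2}+\cdots+1$, and $P(1) = 1$. I would then split
\[\frac{e^{rx}-(-1)^r}{e^{sx}+1} = \frac{e^{rx}-(-1)^r}{e^x+1}\cdot\frac{1}{P(e^x)}.\]
The first factor is Hurwitz because $y+1$ divides $y^r-(-1)^r$ identically in $y$, so the quotient lies in the Laurent ring $\mathbb{Z}[e^x,e^{-x}]$. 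For the second factor, $P(1)=1$ implies that $g(x):=P(e^x)-1$ is a Hurwitz series with $g(0)=0$; then
\[\frac{1}{P(e^x)} = \sum_{k=0}^\infty (-1)^k g(x)^k = \sum_{k=0}^\infty (-1)^k k!\cdot\frac{g(x)^k}{k!}\]
is Hurwitz, since each $g(x)^k/k!$ is Hurwitz and only finitely many terms contribute to any given coefficient.

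The main obstacle is the $s$ odd case: unlike the $s$ even case, where a single identity reduces the generating function to a product of recognizable Hurwitz series, here $2e^{rx}/(e^{sx}+1)$ alone is not Hurwitz, and one must introduce the $(-1)^r E_n(0)$ correction and exploit the divisibility $(e^x+1)\mid(e^{sx}+1)$ that is peculiar to odd~$s$.
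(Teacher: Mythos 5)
Your proof is correct and follows essentially the same route as the paper: for odd $s$ you use the identical factorization $e^{sx}+1=(e^x+1)P(e^x)$ with $P(1)=1$ and invert the denominator as a Hurwitz series with constant term $1$. For even $s$ the paper instead observes directly that $\tfrac12(e^{sx}+1)$ is a Hurwitz series with constant term $1$ (since $s^n/2\in\mathbb{Z}$ for $n\ge1$) and inverts it, whereas you reduce to the integrality of the secant numbers via $e^{(r-m)x}\operatorname{sech}(mx)$ --- a known fact that is itself proved by the same reciprocal argument, so the two versions are equivalent.
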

\begin{proof}
We first note that if $f(x)$ is a Hurwitz series with constant term 1 then $1/f(x)$ is also a Hurwitz series.

By Equation \eqref{e-Eul} we have the generating function
\begin{equation}
\label{e-Eul1}
\sumz n s^nE_{n}(r/s)\frac{x^n}{n!}=\frac{2e^{rx}}{e^{sx}+1}.
\end{equation}
If $s$ is even then $2/(e^{sx}+1)$ has integer coefficients since its reciprocal is 
\begin{equation*}
\tfrac12(e^{sx}+1)=1+\sum_{n=1}^\infty \tfrac{1}{2} s^{n}\frac{x^n}{n!},
\end{equation*}
which has integer coefficients and constant term 1. Thus the series in Equation \eqref{e-Eul1} is a Hurwitz series, and in particular, $s^nE_n(0)$ is an integer.

Now suppose that $s$ is odd. We have
\begin{align*}
\sumz n \tfrac12 s^n\bigl(E_n(r/s) - (-1)^r E_n(0)\bigr)\frac{x^n}{n!}
  &=\frac{e^{rx}-(-1)^r}{e^{sx}+1}\\
  &=\frac{e^{rx}-(-1)^r}{e^x+1}\cdot\frac{e^x+1}{e^{sx}+1}\\
  &=\frac{e^{(r-1)x}-e^{(r-2)x}+\cdots + (-1)^{r-1}}{e^{(s-1)x}-e^{(s-2)x}+\cdots+1}. 
\end{align*}
The denominator has constant term 1, so the quotient is a Hurwitz series. 
\end{proof}

\end{document}